\newtheorem{theorem}{Theorem}[section]
\newtheorem{proposition}{Proposition}[section]
\newtheorem{lemma}{Lemma}[section]
\numberwithin{equation}{section}
\title[Stability inequality]{Stability inequality for the problem of determining an unbounded potential from boundary measurements}
\author[Choulli]{Mourad Choulli}
\address{Universit\'e de Lorraine, Nancy, France}
\email{mourad.choulli@univ-lorraine.fr}
\date{}
\subjclass[2010]{35R30}
\keywords{Stability inequality, Schr\"odinger equation, unbounded potential, boundary measurements.}
\begin{document}

\begin{abstract}
We establish in dimension $3$ a stability inequality for the problem of determining the potential in the Schr\"odinger equation from boundary measurements in the case where the potential belongs to $L^s$ with $s\in (2,3)$.
\end{abstract}

\maketitle

%\tableofcontents

\section{Introduction}

Let $D$ be a $C^{0,1}$ bounded domain of $\mathbb{R}^n$, $n\ge 3$.  Pick $q\in L^{n/2}(D)$ such that $0$ is not an eigenvalue of the operator $\Delta +q$ under Dirichlet boundary condition. According to \cite[Theorem 1.2]{Ch2} and the comments that follow it, we define the Dirichlet-to-Neumann map $\Sigma_q \in \mathscr{B}(H^{1/2}(\partial D),H^{-1/2}(\partial D))$ as follows
\[
\Sigma_q (f)=\partial_\nu u (f)\in H^{-1/2}(\partial D),
\]
where $u\in H^1(D)$ is the unique weak solution of the BVP
\[
(\Delta +q)u=0\quad \mbox{in}\; D,\quad u{_{|\partial D}}=f.
\]
This means that $u_{|\partial D}=f$ in the trace sense and
\[
-\int_D\nabla u\cdot \nabla vdx+\int quvdx=0,\quad v\in H_0^1(D).
\]
It is reported in \cite{DKS} that Lavine and Nachman proved that $\Sigma_q$ determines uniquely $q$. This result was announced in \cite{Na} but has never been published. Chanillo \cite[Theorem]{Cha} established the same uniqueness result for $q$ in a Fefferman-Phong class (containing $L^{n/2}$) with a small norm in this class. The uniqueness in the case $q\in L^s(D)$, $s>n/2$ is due to Jerison and Kenig (see the comments after \cite[Theorem]{Cha}). The uniqueness in the case where $\Delta$ is the Laplace-Beltrami operator on an admissible manifold $M$ and $q\in L^{n/2}(M)$ was obtained by Dos Santos Ferreira, Kenig and Salo \cite{DKS}.

We address the question of whether one can quantify this type of uniqueness results. When $q\in L^n(D)$, a logarithmic stability inequality can be established by modifying the proof in the case $q\in L^\infty(D)$ (e.g. \cite{Ch2}). Apart from this case, this question remains an open problem. We prove in the present work a stability result in dimension $3$ for $q\in L^s(D)$ with $s\in (2,3)$. Let us specify that our analysis does not allow us to include the case $s\in [3/2,2]$.

Let $Q'$ be a bounded rectangle of $\mathbb{R}^2$, $I=(1/2,3/2)$,  $\Omega \Subset Q=I\times Q'$ and we assume that $\Omega$ has $C^{0,1}$ regularity. The boundary of $\Omega$ will be denoted by $\Gamma$.

We choose $I=(1/2,3/2)$ for simplicity. However, the reader can verify that all results still hold if $I$ is replaced by any bounded interval of $\mathbb{R}$.
 
Throughout this text, we write $x=(x_1,x')\in I\times Q'$. The $L^r$-norm, $r\ge 1$, (resp. $H^1$-norm) is simply denoted by $\|\cdot \|_r$ (resp. $\|\cdot \|_{(1,2)}$). $H_0^1(\Omega)$ will be endowed with the norm $\|\nabla  \cdot \|_2$.

Recall that the natural norm of $L^{r_1}(I,L^{r_2}(Q'))$, $1\le r_1,r_2\le \infty$, is given by
\[
\|f\|_{r_1,r_2}=\left(\int_I\|f(x_1,\cdot)\|_{r_2}^{r_1}dx_1\right)^{1/r_1},\quad f\in L^{r_1}(I,L^{r_2}(Q')),\; r_1<\infty,
\]
and
\[
\|f\|_{\infty,r_2}=\sup_{x_1\in I}\|f(x_1,\cdot)\|_{r_2},\quad f\in L^{\infty}(I,L^{r_2}(Q')).
\]

The scalar product of $L^2(Q')$ will be denoted by $(\cdot |\cdot)$.

Let $s>2$, fix $\varkappa>0$, $t_0>0$, $\beta>0$ and $K>0$. Define $\mathscr{Q}$  as the set of functions $q\in L^{\infty,s}(Q)$ so that $\mathrm{supp}(q)\subset \Omega$, satisfy $\|q\|_{\infty,s}\le K$ and  for any $t\ge t_0$ it holds
\begin{equation}\label{0}
\|q\chi_{\{|q|>t\}}\|_{\infty,s}\le  \varkappa t^{-\beta}.
\end{equation}

Let us provide an example of an unbounded function belonging to $\mathscr{Q}$. Pick $x'_0\in Q'$, $q\in C(\overline{Q'}\setminus\{x'_0\})$ and assume that there exists $\delta>0$ so that 

\[
\lim_{x'\rightarrow x'_0}\frac{|q(x')|}{|x'-x'_0|^{-\delta}}<\infty.
\]
Switching to polar coordinates in a ball around $x'_0$, we obtain $q\in L^s(Q')$ if $\delta<2/s$. Furthermore, $q$ satisfies an estimate of the form \eqref{0} with $\beta=2/\delta -s$.

Our goal is to prove the following theorem. Hereinafter, $\|\Sigma_{q_1}-\Sigma_{q_2}\|$ denotes the norm of $\Sigma_{q_1}-\Sigma_{q_2}$ in $\mathscr{B}(H^{1 /2 }(\partial D),H^{-1/2}(\partial D))$, and $\tilde{Q}=\tilde{I}\times \tilde{Q}'$, where $\tilde{I}=(0,2)$ and $\tilde{Q}'\Supset Q'$ is a rectangle.

\begin{theorem}\label{theorem0}
Pick $0<\sigma \le 1$. There exist $C=C(Q,\tilde{Q},K,s,\sigma, \varkappa, t_0,\beta)>0$, $c=c(Q,K,\beta,t_0)>0$ and $\lambda_0=\lambda_0(Q,K,t_0)\ge t_0$ such that, for any $q_1,q_2\in \mathscr{Q}$ with $q=q_1-q_2$ independent of $x_1$, we have
\[
C\|q\|_{H^{-\sigma}(Q')}\le e^{c\lambda}\|\Sigma_{q_1}-\Sigma_{q_2}\|+\lambda^{-\tau},\quad \lambda \ge \lambda_0.
\] 
Here $\tau=\sigma\min(1/8,\beta/4)$.
\end{theorem}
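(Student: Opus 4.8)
The plan is to use complex geometric optics (CGO) solutions adapted to the slab geometry $Q = I \times Q'$, exploiting that the potential difference $q = q_1 - q_2$ is independent of $x_1$. Since the problem is genuinely two-dimensional in character (the unknown lives on $Q'$), I would construct CGO solutions $u_j$ to $(\Delta + q_j)u_j = 0$ whose amplitudes and phases are tuned so that the product $u_1 u_2$, after integrating out the $x_1$ variable over $I$, isolates a Fourier-type mode of $q$ on $Q'$. The starting point is the standard Alessandrini identity: if $u_1 \in H^1$ solves $(\Delta + q_1)u_1 = 0$ and $u_2$ solves $(\Delta + q_2)u_2 = 0$, then
\[
\int_\Omega q\, u_1 u_2\, dx = \langle (\Sigma_{q_1} - \Sigma_{q_2}) f_1, f_2 \rangle,
\]
where $f_j = u_j{}_{|\Gamma}$, so the right-hand side is controlled by $\|\Sigma_{q_1} - \Sigma_{q_2}\|$ times the boundary norms of the $u_j$.

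The key construction is to take geometric optics solutions of the form $u_j = e^{\rho_j \cdot x}(1 + r_j)$ where the complex vectors $\rho_j$ are chosen so that $\rho_1 + \rho_2$ has vanishing $x'$-component but a prescribed frequency $\xi' \in \mathbb{R}^2$ coupled to a decaying exponential in $x_1$, with the remainders $r_j$ controlled in a Bourgain-type space. Because $q$ depends only on $x'$, integrating $e^{(\rho_1+\rho_2)\cdot x}$ over $x_1 \in I$ produces a factor that, for large Carleman parameter $\lambda$, recovers $\widehat{q}(\xi')$ up to an explicit constant while the leftover terms involving $r_1, r_2$ are small. The decay hypothesis \eqref{0} on the tails of $q$ is exactly what lets me absorb the unbounded part of the potential: I would split $q = q\chi_{\{|q|\le t\}} + q\chi_{\{|q|>t\}}$, treat the bounded truncation by the usual CGO estimates, and bound the tail using $\|q\chi_{\{|q|>t\}}\|_{\infty,s} \le \varkappa t^{-\beta}$, then optimize over the cutoff level $t$ as a power of $\lambda$. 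This truncation-and-optimization is where the exponent $\tau = \sigma\min(1/8,\beta/4)$ and the restriction $s \in (2,3)$ will enter.

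Having obtained a pointwise-in-frequency estimate of the form $|\widehat{q}(\xi')| \lesssim e^{c\lambda}\|\Sigma_{q_1}-\Sigma_{q_2}\| + (\text{error in } \lambda)$ valid for $|\xi'| \lesssim \lambda$, I would pass to the negative Sobolev norm $\|q\|_{H^{-\sigma}(Q')}$ by the standard frequency decomposition: low frequencies $|\xi'| \le R$ are estimated by the CGO bound (contributing the $e^{c\lambda}$ term), while high frequencies $|\xi'| > R$ are controlled by interpolation using the a priori bound $\|q\|_{\infty,s} \le K$ and the weight $(1+|\xi'|^2)^{-\sigma}$, which gives a negative power of $R$. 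Balancing $R$ against $\lambda$ yields the claimed two-term stability inequality.

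I expect the main obstacle to be the construction and remainder estimates for the CGO solutions when the potential is only in $L^{\infty,s}$ with $s \in (2,3)$ and genuinely unbounded. The classical $L^2 \to L^2$ resolvent estimates for the conjugated Laplacian $e^{-\rho\cdot x}\Delta e^{\rho\cdot x}$ require the potential to lie in a favorable Lebesgue space so that multiplication by $q$ maps the relevant CGO space back to itself with a norm that is small relative to $\lambda$. The slab geometry helps by effectively reducing the dimension, but controlling the remainder $r_j$ uniformly while simultaneously keeping the tail contribution governed by \eqref{0} small is delicate; this is precisely why the range $s \in [3/2, 2]$ is excluded, since there the multiplier estimates degrade and the remainder can no longer be made to vanish as $\lambda \to \infty$.
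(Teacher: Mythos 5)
Your overall architecture coincides with the paper's: CGO solutions adapted to the slab $Q=I\times Q'$, the Alessandrini identity, a truncation of the potential at a $\lambda$-dependent level to exploit the tail condition \eqref{0}, and a low/high frequency split to pass from pointwise Fourier bounds to the $H^{-\sigma}(Q')$ norm with $\rho=\lambda^{\gamma/4}$. Your heuristics for where $\tau=\sigma\min(1/8,\beta/4)$ and the restriction $s>2$ come from are also consistent with what actually happens.

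The genuine gap is the step you yourself flag as ``the main obstacle'' and then leave unresolved: the construction of the inverse $E_\lambda$ of the conjugated Laplacian with estimates strong enough to close the Neumann series when $q$ is unbounded. Invoking ``Bourgain-type spaces'' does not supply this here. What the paper actually proves (Proposition \ref{proposition1}) is a mixed-norm bound $\|E_\lambda f\|_{\tilde p',p'}\le C\|f\|_{\tilde p,p}$, obtained by decomposing $L^2(Q')$ into spectral clusters $\sqrt{\lambda_j}\in[k,k+1)$ of the cross-sectional Dirichlet Laplacian, proving $L^p\to L^2$ and $L^2\to L^{p'}$ cluster bounds with growth $(k+1)^\theta$, $\theta<1/2$ (interpolating $L^2$ against the Sobolev embedding $H_0^1(Q')\hookrightarrow L^r(Q')$), combining these with the explicit one-dimensional kernel bounds for $h(\lambda,\mu,t)$ from Dos Santos Ferreira--Kenig--Salo, and summing via Hardy--Littlewood--Sobolev in $x_1$. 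It is precisely the constraint $\theta<1/2$ in this cluster estimate that forces $p'<4$ and hence $s=s_{p'}/2>2$; without this lemma there is no quantitative mechanism behind your assertion that ``multiplication by $q$ maps the relevant CGO space back to itself.''

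A second, related omission: the fixed point is not run on $(1-E_\lambda q)^{-1}$ but on $(1-h_0E_\lambda h_1)^{-1}$ after the square-root factorization $q=h_0h_1$ with $h_0=|q|^{1/2}e^{i\mathfrak a}$, $h_1=|q|^{1/2}\in L^{\tilde\ell,\ell}$; only the bilinear operator $h_0E_\lambda h_1$ is bounded (and small for large $|\lambda|$) on $L^2$. Accordingly, the truncation in the paper is performed on $h_0$ at level $|\lambda|^{1/2}$ (Lemma \ref{lemma2}), not on $q$ itself at a level $t$ to be optimized: the bounded piece gains $|\lambda|^{-1/2}$ from the $L^2\to L^2$ estimate \eqref{est1}, the tail piece gains $\varkappa|\lambda|^{-\beta}$ from \eqref{0}, and $\gamma=\min(1/2,\beta)$ drops out with no further optimization over the cutoff. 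Your plan as stated would need to be reorganized around this factorization before the remainder estimates could be made to work.
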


We note that the inequality of theorem \ref{theorem0} gives a stability inequality with a logarithmic continuity modulus.

Our proof of theorem \ref{theorem0} follows the usual method based on the CGO solutions that we construct in section 2. While section 3 is devoted to the proof of theorem \ref{theorem0}. Some ideas we used to prove theorem \ref{theorem0} were borrowed from \cite{DKS}.

\section{Building CGO solutions}

In the following, we use the notation $\Delta=\partial_{x_1}^2+\Delta_{x'}$. Define the operator $A'$ by
\[
D(A')=\{u\in H_0^1(Q');  \Delta_{x'} u\in L^2(Q')\}
\]
and $A'u=-\Delta_{x'} u$ for $u\in D(A')$. Then set
\[
\Lambda=\{\lambda \in \mathbb{R};\; |\lambda|\ge 4;\; \lambda^2 \not\in \sigma(A')\},
\]
where $\sigma(A')$ stands for the (discret) spectrum of $A'$.

Pick $0<\alpha <1/2$, $2<r <2/\alpha -2$ and let $p'=2+\alpha(r-2)\in (2,4(1-\alpha))$ be the conjugate of some $1<p<2$. Set $\theta=\alpha r/p'$ $(<1/2)$ and define $1<\tilde{p}<2$ so that its conjugate $\tilde{p}'=1/\theta$.

\begin{proposition}\label{proposition1}
For every $\lambda \in \Lambda$, there exists $E_\lambda :L^2(Q)\rightarrow H^2(Q)$ such that
\begin{align*}
&e^{\lambda x_1}(-\Delta)e^{-\lambda x_1}E_\lambda f=f,\quad f\in L^2(Q),
\\
&E_\lambda e^{\lambda x_1}(-\Delta)e^{-\lambda x_1} f=f,\quad f\in C_0^\infty (Q).
\end{align*}
Furthermore, the following inequalities hold for any $f\in L^2(Q)$
\begin{align}
&\|E_\lambda f\|_2\le C_0|\lambda|^{-1}\|f\|_2,\label{est1}
\\
&\|E_\lambda f\|_{(1,2)}\le C_0\|f\|_2,\label{est2}
\\
&\|E_\lambda f\|_{\tilde{p}',p'}\le C\|f\|_{\tilde{p},p},\label{est3}
\end{align}
where $C_0=C_0(Q)>0$ and $C=C(Q,\alpha)>0$ are constants.
\end{proposition}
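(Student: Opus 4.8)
The plan is to remove the exponential weight, diagonalize in the transverse variable, and solve the resulting family of one–dimensional problems. A direct computation gives the conjugated operator
\[
P_\lambda:=e^{\lambda x_1}(-\Delta)e^{-\lambda x_1}=-\partial_{x_1}^2+2\lambda\partial_{x_1}-\lambda^2-\Delta_{x'}.
\]
Let $(\phi_k)_{k\ge 1}\subset H_0^1(Q')$ be an orthonormal basis of $L^2(Q')$ made of eigenfunctions of $A'$, say $A'\phi_k=\mu_k\phi_k$ with $0<\mu_1\le\mu_2\le\cdots\to\infty$. Writing $f=\sum_k f_k(x_1)\phi_k$, $f_k(x_1)=(f(x_1,\cdot)\,|\,\phi_k)$, and seeking $E_\lambda f=\sum_k u_k(x_1)\phi_k$, the equation $P_\lambda u=f$ decouples into the one–dimensional ODEs on $I$
\[
-u_k''+2\lambda u_k'+(\mu_k-\lambda^2)u_k=f_k,
\]
whose characteristic roots are $\lambda\pm\sqrt{\mu_k}$ (real and distinct since $\mu_k>0$). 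I would solve each with homogeneous Dirichlet conditions at the endpoints of $I$; this problem is uniquely solvable for every $k$ because $\mu_k>0$, and I let $G_{k,\lambda}(x_1,y_1)$ be the associated Green kernel, the hypothesis $\lambda\in\Lambda$ (in particular $|\lambda|\ge 4$) being what the uniform bounds below will require. Setting $E_\lambda f=\sum_k (G_{k,\lambda}f_k)\phi_k$, the two operator identities reduce to the right/left inverse property of each $G_{k,\lambda}$ summed over $k$, the identity on $C_0^\infty(Q)$ using that such $f$ lie in the domain (Dirichlet data in both variables) together with uniqueness.

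The heart of \eqref{est1} is the bound $\|G_{k,\lambda}\|_{L^2(I)\to L^2(I)}\le C_0|\lambda|^{-1}$, uniform in $k$ and in $\lambda\in\Lambda$; summing in $k$ by Parseval then gives \eqref{est1}. I would obtain it by writing $G_{k,\lambda}$ explicitly from the fundamental system $e^{(\lambda\pm\sqrt{\mu_k})x_1}$ (equivalently from the Sturm–Liouville form $-(e^{-2\lambda x_1}u')'+(\mu_k-\lambda^2)e^{-2\lambda x_1}u$) and estimating the kernel by hand, separating the regime $\sqrt{\mu_k}>|\lambda|$, where the transverse gap already forces decay, from the regime $\sqrt{\mu_k}\le|\lambda|$, including the delicate resonant band $\mu_k\approx\lambda^2$ where one root is near $0$; there the growing exponential is harmless because its coefficient is exponentially small on the unit-length interval $I$. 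Since each kernel lives on $I\times I$ of finite measure, a pointwise $O(|\lambda|^{-1})$ bound already yields the operator bound. For \eqref{est2} I would estimate the companion kernels $\partial_{x_1}G_{k,\lambda}$ and $\sqrt{\mu_k}\,G_{k,\lambda}$ by $O(1)$ in the same way, so that $\|\partial_{x_1}E_\lambda f\|_2$ and $\|\nabla_{x'}E_\lambda f\|_2=(\sum_k\mu_k\|u_k\|_{L^2(I)}^2)^{1/2}$ are $\lesssim\|f\|_2$, which with \eqref{est1} controls the full $H^1$ norm; the $H^2$ mapping then follows from the equation.

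Estimate \eqref{est3} is the uniform-in-$\lambda$ mixed-norm smoothing estimate tailored to the Born series for the CGO solutions: multiplication by a potential in $L^{\infty,s}(Q)$ sends $L^{\tilde p',p'}$ into $L^{\tilde p,p}$ by H\"older (with $1/s=1/p-1/p'$ in $x'$ and $I$ bounded in $x_1$), so \eqref{est3} is precisely what closes the fixed point. I would prove it by interpolation. From \eqref{est2} and the two–dimensional Sobolev embedding $H^1(Q')\hookrightarrow L^r(Q')$ (valid for all finite $r$, here the chosen $2<r<2/\alpha-2$) I get the uniform seed $\|E_\lambda f\|_{2,r}\lesssim\|f\|_2$, while \eqref{est1} gives the gain $\|E_\lambda f\|_{2,2}\lesssim|\lambda|^{-1}\|f\|_2$; interpolating these transverse estimates (together with the dual statements for the adjoint, which is $E_{-\lambda}$) produces the exponent $p'=2+\alpha(r-2)$. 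The remaining gain in $x_1$ I would extract from a one–dimensional fractional integration bound $G_{k,\lambda}:L^{\tilde p}(I)\to L^{\tilde p'}(I)$ uniform in $k,\lambda$, the admissible order being exactly $1-2\theta$ with $\theta=\alpha r/p'$ and $\tilde p'=1/\theta$; a Minkowski/vector-valued argument then assembles $E_\lambda:L^{\tilde p,p}\to L^{\tilde p',p'}$.

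I expect the main obstacle to be the uniform $|\lambda|^{-1}$ kernel bound across the resonant band $\mu_k\approx\lambda^2$: a naive Fourier-multiplier estimate on the whole line gives only $|\mu_k-\lambda^2|^{-1}$, which is not uniform, so one must genuinely use the finite length of $I$ and the boundary conditions to tame the near-zero root. The second delicate point is keeping every constant in the interpolation for \eqref{est3} independent of $\lambda$ while the exponents land exactly on $p'$, $\theta$ and $\tilde p'$, in particular establishing the transverse $L^p$–$L^{p'}$ gain uniformly as the spectral parameter $\lambda^2-\xi_1^2$ approaches $\sigma(A')$.
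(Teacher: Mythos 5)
Your overall architecture (diagonalize in $x'$ via the eigenbasis of $A'$, reduce to a family of one--dimensional problems in $x_1$, then recombine) matches the paper's, but your choice of one--dimensional inverse is the step that breaks: the Dirichlet Green's function $G_{k,\lambda}$ on $I$ does \emph{not} satisfy $\|G_{k,\lambda}\|_{L^2(I)\to L^2(I)}\le C_0|\lambda|^{-1}$ uniformly, and in fact its norm grows exponentially in $|\lambda|$ for the low transverse modes. To see this, conjugate back: the ODE operator is $e^{\lambda x_1}(-\partial_{x_1}^2+\mu_k)e^{-\lambda x_1}$, so its Dirichlet inverse has kernel $e^{\lambda(x_1-y_1)}G_k^0(x_1,y_1)$, where $G_k^0$ is the Dirichlet Green's function of $-\partial_{x_1}^2+\mu_k$ on $I=(1/2,3/2)$, explicitly
\[
G_k^0(x_1,y_1)=\frac{\sinh\bigl(\sqrt{\mu_k}\,(x_{<}-\tfrac12)\bigr)\sinh\bigl(\sqrt{\mu_k}\,(\tfrac32-x_{>})\bigr)}{\sqrt{\mu_k}\,\sinh(\sqrt{\mu_k})}.
\]
For the lowest eigenvalue $\mu_1$ (a fixed constant) and, say, $y_1=3/4$, $x_1=5/4$, one has $G_1^0(5/4,3/4)=c_1>0$ independent of $\lambda$, hence the kernel equals $c_1e^{\lambda/2}$ there; testing against a bump near $y_1=3/4$ shows $\|G_{1,\lambda}\|_{L^2\to L^2}\gtrsim e^{\lambda/2}$. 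The mechanism is that when $\sqrt{\mu_k}\ll|\lambda|$ both characteristic roots $\lambda\pm\sqrt{\mu_k}$ have the same (large) sign, and matching the Dirichlet data forces the Green's function to carry the growing exponential with a coefficient of size $\mu_k^{-1/2}$, not an exponentially small one. You located the danger in the resonant band $\mu_k\approx\lambda^2$, but that band is actually harmless for the Dirichlet problem (there $e^{\lambda(x_1-y_1)}e^{-\sqrt{\mu_k}|x_1-y_1|}\le 1$); the fatal regime is $\sqrt{\mu_k}\ll|\lambda|$. Since \eqref{est1} fails for your operator, \eqref{est2}, \eqref{est3} and the whole Born-series fixed point collapse with it.

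The fix — and what the paper does, following Dos Santos Ferreira--Kenig--Salo and Kenig--Salo--Uhlmann — is to impose \emph{no} boundary condition in $x_1$: one only needs a right inverse, so one takes the whole-line Fourier multiplier
\[
h(\lambda,\mu,t)=\int_{\mathbb{R}}\frac{e^{its}}{s^2+2is\lambda+\mu^2-\lambda^2}\,ds,
\]
whose poles are at $s=-i(\lambda\mp\mu)$; for $\mu<\lambda$ this fundamental solution is supported on a half-line in $t$ and is built from the \emph{decaying} exponentials only, giving $|h|\lesssim \mu^{-1}e^{-|\lambda-\mu||t|}$ uniformly (this is where $\lambda^2\notin\sigma(A')$ enters — for the Dirichlet problem it would be superfluous, another sign the construction must differ). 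One then multiplies by a cutoff $\psi$ equal to $1$ near $I$. With that kernel, the paper's route to \eqref{est3} is close in spirit to yours — interpolate the band projectors $\chi_k$ between $L^2\to L^2$ and $L^2\to L^r$ to get $(k+1)^{\theta}$, dualize, sum $(k+1)^{2\theta}\max_j|h|$ over bands to get a kernel $\lesssim 1+|t|^{-2\theta}$ in $x_1$, and finish with Hardy--Littlewood--Sobolev — but it requires the band-by-band exponential decay of $h$, which a single interpolation of the global $L^2$ bounds would not reproduce. If you replace your Dirichlet Green's function by this one-sided fundamental solution, the rest of your outline can be carried through.
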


\begin{proof}
Let $(\lambda_j)_{j\ge 0}$ be the nondecreasing sequence of eigenvalues of the operator $A'$. Let $(\phi_j)_{j\ge 0}$ be an orthonormal basis of $L^2(Q')$ consisting of eigenfunctions with $A'\phi_j=\lambda_j \phi_j$ for each $j\ge 0$.

For each $k\in \mathbb{N}_0:=\mathbb{N}\cup \{0\}$, define the operator $\chi_k:L^2(Q') \rightarrow L^2(Q')$ as follows
\[
\chi_k u=\sum_{\sqrt{\lambda_j}\in [k,k+1)}(u|\phi_j)\phi_j,\quad u\in L^2(Q'),
\]
if $\{j\in \mathbb{N}_0;\; \sqrt{\lambda_j}\in [k,k+1)\}\ne \emptyset $, and $\chi_k=0$ if $\{j\in \mathbb{N}_0;\; \sqrt{\lambda_j}\in [k,k+1)\}= \emptyset $. 

Pick $k\in \mathbb{N}_0$. We check that
\begin{equation}\label{e1}
\|\chi_k u\|_2\le \|u\|_2, \quad u\in L^2(Q').
\end{equation}
On the other hand, as $\|\nabla \phi_j\|_2=\sqrt{\lambda_j}$ for each $j$, we find
\[
\|\nabla \chi_k u\|_2\le (k+1)\|u\|_2, \quad u\in L^2(Q').
\]
This and the fact that $H_0^1(Q')$ is continuously embedded in $L^r(Q')$ give
\begin{equation}\label{e2}
\|\chi_k u\|_r\le C_0(k+1)\|u\|_2, \quad u\in L^2(Q').
\end{equation}
Here and further $C_0=C_0(Q')>0$ is a generic constant. 

Using \eqref{e1}, \eqref{e2} and an interpolation inequality, we get 
\begin{equation}\label{e3}
\|\chi_k u\|_{p'}\le C_0(k+1)^\theta\|u\|_2, \quad u\in L^2(Q').
\end{equation}
Since $\chi_k$ is self-adjoint, we obtain
\[
\|\chi_ku\|_2^2=(\chi_k^2u|u)\le \|\chi_k^2u\|_{p'}\|u\|_{p},\quad u\in L^2(Q').
\]
This inequality and \eqref{e3} imply
\[
\|\chi_ku\|_2^2\le C_0(k+1)^\theta\|\chi_ku\|_2\|u\|_{p},\quad u\in L^2(Q').
\]
That is we have 
\begin{equation}\label{e4}
\|\chi_ku\|_2\le C_0(k+1)^\theta\|u\|_{p},\quad u\in L^2(Q').
\end{equation}
Pick $f\in L^2(Q)$. For simplicity, we identify $f$ with $f\chi_{Q}$ considered as a function defined on $\mathbb{R}\times Q'$.  Similarly to  \cite{DKS} or \cite{KSU}, we show that $E_\lambda$, $\lambda \in \Lambda$, is explicitly given by
\[
E_\lambda f(x_1,\cdot) =\psi(x_1)\sum_{j\ge 0}\int_{\mathbb{R}}h(\lambda ,\sqrt{\lambda_k} ,x_1-y_1)(f(y_1,\cdot)|\phi_j)\phi_jdy_1, \quad x_1\in \mathbb{R},
\]
where 
\[
h(\lambda,\mu,t)=\int_{\mathbb{R}}\frac{e^{its}}{s^2+2is\lambda +\mu^2-\lambda^2}ds,\quad t\in \mathbb{R},\; \mu >0,
\]
and $\psi \in C_0^\infty (\mathbb{R})$ satisfies $\psi=1$ in a neighborhood of $I$. Furthermore, \eqref{est1} and \eqref{est2} hold. It remains to prove \eqref{est3}.  Since $\chi_k^2=\chi_k$ and
\[
\|E_\lambda f(x_1,\cdot)\|_{p'}\le \sum_{k\ge 0}\|\chi_k^2 E_\lambda f(x_1,\cdot)\|_{p'},
\]
\eqref{e3} yields
\begin{equation}\label{ee1}
\|E_\lambda f(x_1,\cdot)\|_{p'}\le C_0\sum_{k\ge 0}(k+1)^{\theta}\|\chi_k E_\lambda f(x_1,\cdot)\|_2.
\end{equation}
On the other hand, using Minkowski's inequality, we get for $x_1\in I$
\begin{align*}
&\|\chi_k E_\lambda f(x_1,\cdot)\|_2\le \left(\sum_{\sqrt{\lambda_j}\in [k,k+1)}\int_{\mathbb{R}}|h(\lambda ,\sqrt{\lambda_j} ,x_1-y_1)|^2|(f(y_1,\cdot)|\phi_j)|^2dy_1\right)^{1/2}
\\
&\hskip 1cm\le \int_{\mathbb{R}} \left(\sum_{\sqrt{\lambda_j}\in [k,k+1)}|h(\lambda ,\sqrt{\lambda_j} ,x_1-y_1)|^2|( f(y_1,\cdot)|\phi_j)|^2dy_1\right)^{1/2}
\\
&\hskip 1cm\le \int_{\mathbb{R}} \left(\max_{\sqrt{\lambda_j}\in [k,k+1)} |h(\lambda ,\sqrt{\lambda_j} ,x_1-y_1)|^2\sum_{\sqrt{\lambda_j}\in [k,k+1)}|( f(y_1,\cdot)|\phi_j)|^2dy_1\right)^{1/2}
\\
&\hskip 1cm \le \int_{\mathbb{R}} \max_{\sqrt{\lambda_j}\in [k,k+1)} |h(\lambda ,\sqrt{\lambda_j} ,x_1-y_1)|\|\chi_kf(y_1,\cdot)\|_2dy_1
\end{align*}
which, in light of \eqref{e4}, gives
\[
\|\chi_k E_\lambda f(x_1,\cdot)\|_2\le C_0 (k+1)^\theta\int_{\mathbb{R}} \max_{\sqrt{\lambda_j}\in [k,k+1)} |h(\lambda ,\sqrt{\lambda_j} ,x_1-y_1)|\|f(y_1,\cdot)\|_pdy_1.
\]
This inequality in \eqref{ee1} implies
\begin{align}\label{ee2}
&\|E_\lambda f(x_1,\cdot)\|_{p'}
\\
&\qquad\le C_0\sum_{k\ge 0}(k+1)^{2\theta}\int_{\mathbb{R}} \max_{\sqrt{\lambda_j}\in [k,k+1)} |h(\lambda ,\sqrt{\lambda_j} ,x_1-y_1)|\|f(y_1,\cdot)\|_pdy_1.\nonumber
\end{align}
Let $t\in \mathbb{R}$. The following inequality is proved in \cite[Proposition 2.1]{DKS} 
\[
 \max_{\sqrt{\lambda_j}\in [k,k+1)} |h(\lambda ,\sqrt{\lambda_j} ,t)|\le \left\{
 \begin{array}{ll}
 k^{-1}e^{-(k-\lambda)|t|}\quad &\mbox{if}\; \lambda <k,
 \\
 k^{-1}  &\mbox{if}\; k\le \lambda <k+1
 \\
 k^{-1}e^{-(\lambda-(k+1))|t|} &\mbox{if}\; \lambda >k+1,
 \end{array}
 \right.
\]
and, from \cite[Lemma 2.3]{DKS}, we have
\[
|m(\lambda ,0,t)|\le |t|e^{-\lambda |t|}.
\]
We derive from these inequalities
\begin{align*}
&\sum_{k\ge 0}(k+1)^{2\theta} \max_{\sqrt{\lambda_j}\in [k,k+1)} |h(\lambda ,\sqrt{\lambda_j} ,t)|
\\
&\hskip 1cm\le e^{-\lambda |t|/2}+\sum_{1\le k\le \lambda-2}k^{2\theta-1}e^{-(\lambda-(k+1))|t|}+2\lambda^{2\theta-1}
\\
&\hskip 6cm  +\sum_{k> \lambda+1}k^{2\theta-1}e^{-(k-\lambda)|t|}.
\end{align*}
Hence
\begin{align*}
&\sum_{k\ge 0}(k+1)^{2\theta} \max_{\sqrt{\lambda_j}\in [k,k+1)} |h(\lambda ,\sqrt{\lambda_j} ,t)|
\\
& \hskip 2cm \le 3+\int_0^{\lambda-2}\rho^{2\theta-1}e^{((\lambda-2)-\rho))|t|}d\rho+\int_\lambda^\infty \rho^{2\theta-1}e^{(\rho-\lambda)|t|}d\rho
\\
& \hskip 2cm \le 3+|t|^{-2\theta}\left(\int_0^1\rho^{2\theta-1}d\rho+\int_1^\infty e^{-\rho}d\rho\right).
\\
& \hskip 2cm \le \kappa(1+|t|^{-2\theta}),
\end{align*}
where $\kappa=\kappa(\alpha)>0$ is a constant. Putting the last inequality in \eqref{ee2}, we find
\begin{align*}
\|E_\lambda f(x_1,\cdot)\|_{p'}
&\le C_0\kappa \int_{\mathbb{R}}(1+|x_1-y_1|^{-2\theta}) \|f(y_1,\cdot)\|_pdy_1
\\
&\le C_0\kappa \int_I\|f(y_1,\cdot)\|_pdy_1+C_0\kappa \int_{\mathbb{R}}|x_1-y_1|^{-2\theta} \|f(y_1,\cdot)\|_pdy_1.
\end{align*}
Since $2\theta=1-1/\tilde{p}+1/\tilde{p}'$, upon substituting $\kappa$ by a similar constant, applying  Hardy-Littlewood-Sobolev's inequality to the last term of the inequality above, we obtain
\[
\|E_\lambda f\|_{\tilde{p}',p'}\le C_0\kappa \|f\|_{\tilde{p},p}.
\]
This completes the proof.
\end{proof}

For each $r>2$, define $s_r=2r/(r-2)$, set $\ell=s_{p'}$ and $\tilde{\ell}=s_{\tilde{p}'}$. For simplicity, the norm of $\mathscr{B}(L^2(Q))$ will denoted also by $\|\cdot \|$.

\begin{lemma}\label{lemma1.0}
Let $h_j\in L^{\tilde{\ell},\ell}(Q)$, $j=0,1$. For any $\lambda \in \Lambda$  we have
\begin{equation}\label{e5.0}
\|h_0E_\lambda h_1\|\le C\|h_0\|_{\tilde{\ell},\ell}\|h_1\|_{\tilde{\ell},\ell},
\end{equation}
where $C$ is the constant in Proposition \ref{proposition1}. Furthermore, it holds 
\begin{equation}\label{e6.0}
\|h_0E_\lambda h_1\|\; \underset{|\lambda|\rightarrow \infty}{\longrightarrow}\; 0.
\end{equation}
\end{lemma}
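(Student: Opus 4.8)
The plan is to prove \eqref{e5.0} by a chain of three Hölder inequalities in the mixed-norm spaces, with Proposition \ref{proposition1} inserted in the middle, and then to deduce \eqref{e6.0} by an approximation argument in which \eqref{e5.0} controls the error uniformly in $\lambda$.

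For \eqref{e5.0}, fix $f\in L^2(Q)$ and estimate $\|h_0E_\lambda(h_1f)\|_2$. Since $\|\cdot\|_2=\|\cdot\|_{2,2}$, Hölder's inequality applied separately in $x_1$ and in $x'$ yields $\|h_0g\|_{2,2}\le \|h_0\|_{\tilde\ell,\ell}\|g\|_{\tilde p',p'}$; this is legitimate because $1/\tilde\ell+1/\tilde p'=1/2$ and $1/\ell+1/p'=1/2$, these being the defining relations of $\tilde\ell=s_{\tilde p'}$ and $\ell=s_{p'}$. Taking $g=E_\lambda(h_1f)$ and invoking \eqref{est3} gives $\|E_\lambda(h_1f)\|_{\tilde p',p'}\le C\|h_1f\|_{\tilde p,p}$. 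A second Hölder inequality then produces $\|h_1f\|_{\tilde p,p}\le \|h_1\|_{\tilde\ell,\ell}\|f\|_2$, where the required identities $1/\tilde p=1/\tilde\ell+1/2$ and $1/p=1/\ell+1/2$ follow from the conjugacies $1/p+1/p'=1/\tilde p+1/\tilde p'=1$ together with the relations above. Chaining the three estimates yields $\|h_0E_\lambda(h_1f)\|_2\le C\|h_0\|_{\tilde\ell,\ell}\|h_1\|_{\tilde\ell,\ell}\|f\|_2$, which is \eqref{e5.0}. The only point to verify is the arithmetic of the exponents, and since $\ell,\tilde\ell<\infty$ (because $p',\tilde p'>2$) all the norms involved are finite.

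For \eqref{e6.0}, the difficulty is that $h_0,h_1$ are only in $L^{\tilde\ell,\ell}$, so the decay $|\lambda|^{-1}$ of \eqref{est1} cannot be used directly with a constant depending on $\|h_0\|_\infty\|h_1\|_\infty$. I would therefore truncate: set $h_j^n=h_j\chi_{\{|h_j|\le n\}}\in L^\infty(Q)$, and note that $\|h_j-h_j^n\|_{\tilde\ell,\ell}\to 0$ as $n\to\infty$ by dominated convergence (valid since $\ell,\tilde\ell<\infty$ and $Q$ has finite measure). Writing
\[
h_0E_\lambda h_1-h_0^nE_\lambda h_1^n=(h_0-h_0^n)E_\lambda h_1+h_0^nE_\lambda(h_1-h_1^n),
\]
estimate \eqref{e5.0} bounds the two terms on the right by $C\|h_0-h_0^n\|_{\tilde\ell,\ell}\|h_1\|_{\tilde\ell,\ell}$ and $C\|h_0^n\|_{\tilde\ell,\ell}\|h_1-h_1^n\|_{\tilde\ell,\ell}$ respectively, \emph{uniformly in} $\lambda$. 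For the remaining term, since $h_0^n,h_1^n$ are bounded, \eqref{est1} gives $\|h_0^nE_\lambda h_1^nf\|_2\le C_0|\lambda|^{-1}\|h_0^n\|_\infty\|h_1^n\|_\infty\|f\|_2$, hence $\|h_0^nE_\lambda h_1^n\|\to 0$ as $|\lambda|\to\infty$ for each fixed $n$. A three-epsilon argument --- first choosing $n$ large to make the two error terms small uniformly in $\lambda$, then letting $|\lambda|\to\infty$ --- then yields \eqref{e6.0}.

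The main obstacle is precisely this last step: controlling the truncation error uniformly in $\lambda$, which is exactly the role of \eqref{e5.0}; without such a $\lambda$-independent bound one could not interchange the limit $|\lambda|\to\infty$ with the truncation $n\to\infty$. By contrast, the exponent bookkeeping in the proof of \eqref{e5.0} is routine, requiring only care to match the Hölder exponents with the definitions of $\ell,\tilde\ell$ and the conjugacy relations.
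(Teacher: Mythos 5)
Your proof is correct and follows essentially the same route as the paper: the same chain of mixed-norm H\"older inequalities combined with \eqref{est3} for \eqref{e5.0}, and the same truncation-plus-three-epsilon argument (tail terms controlled uniformly in $\lambda$ via \eqref{e5.0}, bounded parts via \eqref{est1}) for \eqref{e6.0}. The only cosmetic difference is that you organize the truncation error into two cross terms rather than the paper's three, which changes nothing of substance.
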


\begin{proof}
Let $f\in L^2(Q)$. Since
\[
\|h_0E_\lambda h_1f\|_2^2=\int_I\|h_0(x_1,\cdot)E_\lambda h_1f(x_1,\cdot)\|_2^2dx_1,
\]
applying twice H\"older's inequality, we get
\begin{align*}
\|h_0E_\lambda h_1f\|_2^2&\le \int_I\|h_0(x_1,\cdot)\|_{\ell}^2\|E_\lambda h_1f(x_1,\cdot)\|_{p'}^2dx_1
\\
&\le \|h_0\|_{\tilde{\ell},\ell}^2\|E_\lambda h_1f\|_{\tilde{p}',p'}^2.
\end{align*}
Combined with \eqref{est3}, this inequality implies
\begin{equation}\label{e5.1}
\|h_0E_\lambda h_1f\|_2\le C \|h_0\|_{\tilde{\ell},\ell}\|h_1f\|_{\tilde{p},p}\le C\|h_0\|_{\tilde{\ell},\ell}\|h_1\|_{\tilde{\ell},\ell}\|f\|_2,
\end{equation}
from which we derive \eqref{e5.0}.

Let $0<\epsilon <1$ be fixed arbitrarily. We split $h_j$, $j=0,1$, into two terms $h_j=h_j^0+h_j^1$ with
\[
h_j^0=h_j\chi_{\{|h_j|\le \varrho\}},\quad h_j^1=h_j\chi_{\{|h_j|> \varrho\}}.
\]
We fix $\varrho>0$ sufficiently large in such a way that $\|h_j^1\|_{\tilde{\ell},\ell}\le \epsilon$, $j=0,1$. From \eqref{est1}, we have
\begin{equation}\label{e7.0}
\|h_0^0E_\lambda h_1^0\|\le C_0|\lambda|^{-1}\varrho^2.
\end{equation}
where $C_0=C_0(Q)>0$ is a constant. On the other hand, it follows from \eqref{e5.1} that
\begin{equation}\label{e8.0}
\|h_0^0E_\lambda h_1^1\|+ \|h_0^1E_\lambda h_1^0\| \le K_0\epsilon ,\quad \|h_0^1E_\lambda h_1^1\|\le \epsilon^2 ,
\end{equation}
where $K_0=\|h_0\|_{\tilde{\ell},\ell}+\|h_1\|_{\tilde{\ell},\ell}$. Choose $\lambda_\epsilon>0$ so that $|\lambda|^{-1}|\varrho^2\le \epsilon $ for each $\lambda \in \Lambda$ with $|\lambda| \ge \lambda_\epsilon$. 
Putting together \eqref{e7.0} and \eqref{e8.0}, we obtain
\[
\|h_0E_\lambda h_1\|\le (C_0+K_0+1)\epsilon,\quad \lambda \in \Lambda,\; |\lambda| \ge \lambda_\epsilon.
\]
That is we proved \eqref{e6.0}
\end{proof}

The notations
\[
N_j(\mu)=\|e^{\mu x_1}\|_{L^j(I)},\; \tilde{N}_j(\mu)=\|e^{\mu x_1}\|_{L^j(\tilde{I})}, \quad 1\le j\le \infty.
\]
will be used in the following.

\begin{proposition}\label{proposition2}
Let $q\in L^{\tilde{\ell}/2,\ell/2}(Q)$ satisfying $\|q\|_{\tilde{\ell}/2,\ell/2}\le K_0$ for some given $K_0>0$. There exist $\lambda_0=\lambda_0(Q,K_0)>0$ and a countable set  $\aleph$ so that $\Lambda_0:=\mathbb{R}\setminus \aleph \subset \Lambda$ and, for each $\xi \in \mathbb{S}^1$ and $\zeta=\lambda-\mu$ with $\lambda \in \Lambda_0$, $|\lambda|\ge \lambda_0$  and $\mu \in (-\infty ,0]$, we find  $u\in W^{2,p}(Q)\cap H^1(Q)$ a solution of the equation $(\Delta +q)u=0$ of the form
\begin{equation}\label{e16.0}
u=e^{\zeta (-x_1+ix'\cdot \xi)}+e^{-\lambda x_1}v. 
\end{equation} 
Furthermore, we have
\begin{equation}\label{e15.0.1}
\|u\|_{(1,2)}\le Ce^{2|\lambda|},
\end{equation}
where $C=C(Q,\tilde{Q},\alpha,K_0)>0$.
\end{proposition}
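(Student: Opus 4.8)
The plan is to construct $u$ by the classical complex geometric optics scheme: a harmonic background plus a correction inverted by the operator $E_\lambda$ of Proposition~\ref{proposition1}. First I would take the background $u_0=e^{\zeta(-x_1+ix'\cdot\xi)}$ and check that it is harmonic: writing the phase as $\zeta\rho$ with $\rho=-x_1+ix'\cdot\xi$, one has $\nabla\rho\cdot\nabla\rho=(-1)^2+(i\xi)\cdot(i\xi)=1-|\xi|^2=0$ and $\Delta\rho=0$, whence $\Delta u_0=\zeta^2(\nabla\rho\cdot\nabla\rho)u_0=0$. Seeking $u=u_0+e^{-\lambda x_1}v$ as in \eqref{e16.0} and using $\Delta u_0=0$, the equation $(\Delta+q)u=0$ reduces to $-\Delta(e^{-\lambda x_1}v)=qu$; applying $e^{\lambda x_1}$ and setting $L_\lambda:=e^{\lambda x_1}(-\Delta)e^{-\lambda x_1}$, this reads $L_\lambda v=e^{\lambda x_1}qu$, i.e., in view of the mapping properties of $E_\lambda$, the integral equation
\[
v=E_\lambda\!\left(e^{\lambda x_1}qu_0\right)+E_\lambda(qv).
\]

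To solve it I would symmetrize. Set $h_0=|q|^{1/2}$ and $h_1=\mathrm{sgn}(q)\,|q|^{1/2}$, so that $q=h_0h_1$ and $\|h_0\|_{\tilde{\ell},\ell}=\|h_1\|_{\tilde{\ell},\ell}=\|q\|_{\tilde{\ell}/2,\ell/2}^{1/2}\le K_0^{1/2}$. Introducing $V=h_0v$ and multiplying the integral equation by $h_0$ turns it into
\[
(I-h_0E_\lambda h_1)V=h_0E_\lambda\!\left(e^{\lambda x_1}qu_0\right)\qquad\text{in }L^2(Q),
\]
whose right-hand side lies in $L^2(Q)$ because $e^{\lambda x_1}qu_0=h_1\psi_0$ with $\psi_0=h_0\,e^{\mu x_1+i\zeta x'\cdot\xi}$ and $|\psi_0|\le|q|^{1/2}\in L^2(Q)$, so it equals $(h_0E_\lambda h_1)\psi_0$. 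By \eqref{e6.0} the operator norm $\|h_0E_\lambda h_1\|$ tends to $0$ as $|\lambda|\to\infty$, uniformly in the data since $\|h_j\|_{\tilde{\ell},\ell}\le K_0^{1/2}$; hence I fix $\lambda_0=\lambda_0(Q,K_0)\ge4$ with $\|h_0E_\lambda h_1\|\le1/2$ for $|\lambda|\ge\lambda_0$, and the Neumann series yields a unique $V\in L^2(Q)$ with $\|V\|_2\le C(Q,K_0)$. Setting $v:=E_\lambda(e^{\lambda x_1}qu_0)+E_\lambda(h_1V)$ one verifies $h_0v=V$ and that $v$ solves the integral equation, so $u=u_0+e^{-\lambda x_1}v$ solves $(\Delta+q)u=0$; the admissible frequencies are those with $\lambda^2\notin\sigma(A')$, i.e.\ $\Lambda_0=\mathbb{R}\setminus\aleph$ with $\aleph=\{\lambda:\lambda^2\in\sigma(A')\}$ countable. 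Using \eqref{est3} and \eqref{e5.1} together with $|e^{\lambda x_1}qu_0|\le|q|$ and the embedding $L^{\tilde{\ell}/2,\ell/2}\hookrightarrow L^{\tilde{p},p}$, both $v$ and hence $u$ lie in $L^{\tilde{p}',p'}$ with $\|v\|_{\tilde{p}',p'}\le C(Q,\alpha,K_0)$ bounded independently of $\lambda$.

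For the regularity and the quantitative bound \eqref{e15.0.1} I would work with the correction $w=u-u_0=e^{-\lambda x_1}v$, which by construction (the cutoff $\psi$ and the eigenfunctions $\phi_j\in H_0^1(Q')$) is compactly supported in $x_1$ inside $\tilde{I}$ and satisfies the Dirichlet condition on $I\times\partial Q'$, and solves $-\Delta w=qu\in L^{\tilde{p},p}(Q)$. After justifying $w\in H^1$ (approximating the source in $L^{\tilde p,p}$ by $L^2$ functions, invoking $E_\lambda:L^2\to H^2$ and passing to the limit via \eqref{est3}), the energy identity gives
\[
\|\nabla w\|_2^2=\mathrm{Re}\!\int_Q qu\,\overline{w}\,dx\le\int_Q|q|\,|u|\,|w|\,dx\le\|q\|_{\tilde{\ell}/2,\ell/2}\,\|u\|_{\tilde{p}',p'}\,\|w\|_{\tilde{p}',p'},
\]
the trilinear bound following from Hölder's inequality in $x'$ (where $2/\ell+2/p'=1$) and in $x_1$ (where $2/\tilde{\ell}+2/\tilde{p}'=1$): this is exactly where the choices $\ell=s_{p'}$, $\tilde{\ell}=s_{\tilde{p}'}$ pay off. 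Since $L^{\tilde{p}',p'}\hookrightarrow L^2(Q)$, inserting $\|u\|_{\tilde{p}',p'}\le\|u_0\|_{\tilde{p}',p'}+\|w\|_{\tilde{p}',p'}$ and the elementary bound $\sup_{\tilde I}|e^{-\lambda x_1}|\le e^{2|\lambda|}$ yields $\|w\|_{(1,2)}\le Ce^{2|\lambda|}$; a direct computation bounds $\|u_0\|_{(1,2)}$ by $C|\zeta|^{1/2}e^{3|\zeta|/2}\le Ce^{2|\lambda|}$, using $|\zeta|\le|\lambda|$ in the only unfavorable range $\lambda<\mu\le0$. Adding these gives \eqref{e15.0.1}, and the $W^{2,p}$ regularity then follows from elliptic regularity for $-\Delta w=qu$, placing $qu\in L^p$ via $H^1(Q)\hookrightarrow L^6(Q)$.

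The step I expect to be the main obstacle is precisely this $H^1$ bound on $w$. The natural source $qu=e^{-\lambda x_1}g$ only belongs to the mixed-norm space $L^{\tilde{p},p}$ with $\tilde{p},p<2$, so the resolvent estimate \eqref{est2}, which requires an $L^2$ datum, is not directly applicable and $\|\nabla v\|_2$ cannot be controlled head-on. The device that rescues the argument is to estimate $w$ rather than $v$, exploiting that $w$ is compactly supported in $x_1$ and satisfies a Dirichlet condition in $x'$, so the energy identity carries no boundary term, combined with the trilinear mixed-norm Hölder inequality above that balances $q\in L^{\tilde{\ell}/2,\ell/2}$ against the two factors $u,w\in L^{\tilde{p}',p'}$. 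Careful bookkeeping of the exponential weights $e^{\pm\lambda x_1}$ over $\tilde{I}$ (the reason the larger interval $\tilde{I}$ and rectangle $\tilde{Q}$ enter the constant) is what produces the factor $e^{2|\lambda|}$.
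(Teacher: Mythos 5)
Your construction of $v$ is essentially the paper's: the same harmonic background $e^{\zeta(-x_1+ix'\cdot\xi)}$, the same symmetrization $q=h_0h_1$ with $\|h_j\|_{\tilde\ell,\ell}\le K_0^{1/2}$, and the same inversion of $I-h_0E_\lambda h_1$ by Neumann series for $|\lambda|\ge\lambda_0(Q,K_0)$ using \eqref{e6.0}, followed by \eqref{est3} to bound $\|v\|_{\tilde p',p'}$. Up to that point the two arguments coincide (the paper writes $w=(I-h_0E_\lambda h_1)^{-1}\phi$ and $v=E_\lambda h_1w$, which is your $V$ after the commutation $(I-T)^{-1}T=T(I-T)^{-1}$).

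Where you diverge is the $H^1$ bound, and that is where your argument has a genuine gap. The identity $\|\nabla w\|_2^2=\mathrm{Re}\int qu\,\overline{w}$ requires both that $w$ have no boundary contribution on the domain of integration \emph{and} that $-\Delta w=qu$ hold on that same domain, and you cannot have both. The compact support of $w=e^{-\lambda x_1}v$ in $x_1$ is manufactured by the cutoff $\psi$ in the explicit formula for $E_\lambda$, and $e^{\lambda x_1}(-\Delta)e^{-\lambda x_1}E_\lambda f=f$ only where $\psi\equiv 1$; on the transition region $\{\psi'\neq0\}$ one picks up commutator terms $-2\psi'\partial_{x_1}(e^{-\lambda x_1}\tilde v)-\psi''e^{-\lambda x_1}\tilde v$, which involve exactly the first derivative you are trying to estimate (so the argument is circular there unless you prove separate pointwise bounds on $\partial_th(\lambda,\mu,t)$, which you do not sketch). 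If instead you integrate only over $Q=I\times Q'$, where the equation does hold, then $w$ does not vanish at $x_1=1/2,3/2$ and the boundary terms $\int_{\{x_1=1/2\}\cup\{x_1=3/2\}}\partial_{x_1}w\,\overline{w}\,dx'$ are not controlled by $\|v\|_{\tilde p',p'}$. Your trilinear H\"older step itself is correct ($2/\ell+2/p'=1$ and $2/\tilde\ell+2/\tilde p'=1$ do hold), but it is applied to an identity that has not been established. The paper avoids integration by parts entirely: it observes $qu\in L^{\tilde p,p}(\tilde Q)\subset L^p(\tilde Q)$, applies the \emph{interior} $W^{2,p}$ estimate of Gilbarg--Trudinger (Theorem 9.11) on $Q\Subset\tilde Q$ to get $\|u\|_{W^{2,p}(Q)}\le C(\|u\|_{L^2(\tilde Q)}+\|q\|_{\tilde\ell/2,\ell/2}\|u\|_{L^{\tilde p',p'}(\tilde Q)})$, and then uses the Sobolev embedding $W^{2,p}(Q)\hookrightarrow H^1(Q)$ (valid since $p\ge 4/3$ in dimension $3$) together with \eqref{e14.0} on $\tilde Q$, where $\tilde N_\infty(\mu)=1$, to conclude \eqref{e15.0.1}. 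I would recommend replacing your energy argument by this elliptic-regularity step; your final remark that $W^{2,p}$ regularity comes afterwards via $H^1\hookrightarrow L^6$ then becomes unnecessary, since the $W^{2,p}$ estimate is what produces the $H^1$ bound in the first place.
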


\begin{proof}
Let $\zeta=\lambda-\mu$ with $\lambda \in \Lambda$ and $\mu \in (-\infty ,0]$. Pick $\xi \in \mathbb{S}^{n-1}$. Then elementary calculations show that
\[
\Delta e^{\zeta (-x_1+ix'\cdot \xi)}=0.
\]
Let $q\in L^{\tilde{\ell}/2,\ell/2}(Q)$  satisfying $\|q\|_{\tilde{\ell}/2,\ell/2}\le K_0$. If $q=|q|e^{i\mathfrak{a}}$ we set $h_0=|q|^{1/2}e^{i\mathfrak{a}}$ and $h_1=|q|^{1/2}$. We have $h_j\in L^{\tilde{\ell},\ell}(Q)$, $j=0,1$, with
\begin{equation}\label{e11.0}
\|h_j\|_{\tilde{\ell},\ell}\le K_0^{1/2}.
\end{equation}
Also, $\phi=h_0e^{i\zeta x'\cdot\xi+\mu x_1}\in L^2(Q)$ with
\begin{equation}\label{e12.0}
\|\phi\|_2\le c_0N_\infty(\mu)\; (=c_0e^{-|\mu|/2}),
\end{equation}
where $c_0=c_0(Q,K_0)>0$ is a constant.

From Lemma \ref{lemma1.0} and its proof, there exits $\lambda_0=\lambda_0(Q,K_0)>0$ so that
\[
\|h_0E_\lambda h_1\|\le 1/2,\quad \lambda \in \Lambda,\; |\lambda|\ge \lambda_0.
\]
Fix $\lambda \in \Lambda$ arbitrarily so that  $|\lambda|\ge \lambda_0$. Define
\[
w=(1-h_0E_\lambda h_1)^{-1}\phi.
\]
Using \eqref{e12.0}, we find
\begin{equation}\label{e13.0}
\|w\|_2\le 2c_0N_\infty(\mu).
\end{equation}

Let $v=E_\lambda h_1w$. It follows from the third inequality in Proposition \ref{proposition1} that
\[
\|v\|_{\tilde{p}',p'}\le C_0\|h_1w\|_{\tilde{p},p}\le C_0\|h_1\|_{\tilde{\ell},\ell}\|\phi\|_2,
\]
where $C_0=C_0(Q,\alpha)>0$ is a constant. In light of \eqref{e11.0} and \eqref{e12.0}, we get from the last inequality
\begin{equation}\label{e14.0}
\|v\|_{\tilde{p}',p'}\le C_1N_\infty(\mu),
\end{equation}
where $C_1=C_1(Q,K_0,\alpha)$ is a constant.

Let 
\[
u=e^{\zeta (-x_1+ix'\cdot \xi)}+e^{-\lambda x_1}v. 
\]
We check that $(\Delta +q)u=0$ in $Q$ and, as $qu\in L^{\tilde{p},p}(Q)\subset L^p(Q)$ (simple calculations give $\tilde{p}-p=2(1-\alpha)/p'$), the interior $W^{2,p}$ regularity in $\tilde{Q}$ yields $u\in W^{2,p}(Q)$. Furthermore, applying \cite[Theorem 9.11, page 235]{GT}, we get
\begin{align*}
\|u\|_{W^{2,p}(Q)}&\le C'\left(\|u\|_{L^p(\tilde{Q})}+\|qu\|_{L^p(\tilde{Q})}\right)
\\
&\le C'\left(\|u\|_{L^2(\tilde{Q})}+\|qu\|_{L^{\tilde{p},p}(\tilde{Q})}\right)
\\
&\le C'\left(\|u\|_{L^2(\tilde{Q})}+\|q\|_{\tilde{\ell}/2,\ell/2}\|u\|_{L^{\tilde{p}',p'}(\tilde{Q})}\right)
\end{align*}
where $C'=C'(Q,\tilde{Q})>0$ is a constant. 

Next, since $p\ge \min_{0<\alpha<1/2}4(1-\alpha)/(3-4\alpha)=4/3\ge 5/6$, we derive that $W^{1,p}(Q)$ is continuously embedded in $L^2(Q)$. In consequence, $u\in H^1(Q)$ and
\[
\|u\|_{(1,2)}\le C'\left(\|u\|_{L^2(\tilde{Q})}+\|q\|_{\tilde{\ell}/2,\ell/2}\|u\|_{L^{\tilde{p}',p'}(\tilde{Q})}\right)
\]
Then \eqref{e15.0.1} follows readily from \eqref{e14.0} with $Q$ substituted by $\tilde{Q}$ (note that $\tilde{N}_\infty(\mu)=1$).
\end{proof}

In the remainder of this text, $\lambda_0$ and $\Lambda_0$ are as in Proposition \ref{proposition2} with $K_0=K$. By replacing $\lambda_0$ with $\max(\lambda_0,t_0)$, we assume that $\lambda_0\ge t_0$.

\begin{lemma}\label{lemma2}
Let  $q=|q|e^{i\mathfrak{a}}\in \mathscr{Q}$, $h_0=|q|^{1/2}e^{i\mathfrak{a}}$ and $h_1=|q|^{1/2}$. Let $v$ be defined as in the proof of Proposition \ref{proposition1}. Then the following inequality holds
\begin{equation}\label{e15}
\|v\|_2\le C|\lambda|^{-\gamma}N_\infty(\mu),\quad \mbox{for any}\; \lambda \in \Lambda_0,\; |\lambda |\ge \lambda_0,
\end{equation}
where $C=C(Q, K,\varkappa,t_0)$ and $\gamma=\min(1/2,\beta)$.
\end{lemma}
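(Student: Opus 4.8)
The plan is to unravel the definition of $v$ from the construction in Proposition~\ref{proposition2} and turn it into a fixed-point identity in which the singular factor enters through $q$ rather than through $|q|^{1/2}$. Recall that $v=E_\lambda h_1 w$ with $w=(1-h_0E_\lambda h_1)^{-1}\phi$ and $\phi=h_0e^{i\zeta x'\cdot\xi+\mu x_1}$, so that $w=\phi+h_0E_\lambda h_1 w=\phi+h_0v$. Substituting this back and using $h_1h_0=q$ together with $h_1\phi=qe^{i\zeta x'\cdot\xi+\mu x_1}=:qg$, I obtain
\[
v=E_\lambda(qg)+E_\lambda(qv),\qquad g=e^{i\zeta x'\cdot\xi+\mu x_1}.
\]
This reorganisation is the whole point: the tail hypothesis \eqref{0} is phrased for $q$, so applying it to $q$ (and not to $|q|^{1/2}$) will upgrade the gain from $t^{-\beta/2}$ to $t^{-\beta}$.

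The source term is harmless. Since $\zeta,\mu$ are real and $\mu\le 0$, we have $|g|=e^{\mu x_1}\le N_\infty(\mu)$, while $q\in L^2(Q)$ with $\|q\|_2\le C(Q)K$ because $s>2$, $Q$ is bounded and $\mathrm{supp}\,q\subset\Omega$. Hence $\|qg\|_2\le C(Q)KN_\infty(\mu)$, and \eqref{est1} yields $\|E_\lambda(qg)\|_2\le C|\lambda|^{-1}N_\infty(\mu)$. For the remaining term I split $q=q\chi_{\{|q|\le t\}}+q\chi_{\{|q|>t\}}$ at a height $t\ge t_0$ to be fixed. On the low part $|q\chi_{\{|q|\le t\}}|\le t$, so \eqref{est1} gives $\|E_\lambda(q\chi_{\{|q|\le t\}}v)\|_2\le C_0|\lambda|^{-1}t\|v\|_2$. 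On the high part I argue as in Lemma~\ref{lemma1.0}: H\"older's inequality together with \eqref{est3} and the embedding $L^{\tilde p',p'}(Q)\hookrightarrow L^2(Q)$ (valid since $Q$ is bounded and $\tilde p',p'>2$) give $\|E_\lambda(q\chi_{\{|q|>t\}}v)\|_2\le C\|q\chi_{\{|q|>t\}}\|_{\tilde\ell/2,\ell/2}\|v\|_{\tilde p',p'}$. The a priori bound \eqref{e14.0}, namely $\|v\|_{\tilde p',p'}\le C_1N_\infty(\mu)$, prevents any circular appearance of $\|v\|_2$, and the tail condition \eqref{0} — transferred to the mixed norm via $L^s(Q')\hookrightarrow L^{\ell/2}(Q')$ (which holds because the parameters are chosen so that $\ell/2\le s$) and $L^\infty(I)\hookrightarrow L^{\tilde\ell/2}(I)$ — yields $\|q\chi_{\{|q|>t\}}\|_{\tilde\ell/2,\ell/2}\le C\varkappa t^{-\beta}$ for $t\ge t_0$. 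Altogether $\|E_\lambda(q\chi_{\{|q|>t\}}v)\|_2\le Ct^{-\beta}N_\infty(\mu)$.

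Collecting the three estimates gives
\[
\|v\|_2\le C|\lambda|^{-1}N_\infty(\mu)+C_0|\lambda|^{-1}t\|v\|_2+Ct^{-\beta}N_\infty(\mu).
\]
I then choose $t=c|\lambda|$ with $c=1/(2C_0)$, so the middle coefficient equals $\frac{1}{2}$ and the term $\frac{1}{2}\|v\|_2$ is absorbed on the left; this is legitimate as soon as $|\lambda|$ is large enough that $t=c|\lambda|\ge t_0$, which only forces me to enlarge $\lambda_0$ to a threshold still depending on $Q,K,t_0$. With this choice $t^{-\beta}=(c|\lambda|)^{-\beta}$, whence $\|v\|_2\le C(|\lambda|^{-1}+|\lambda|^{-\beta})N_\infty(\mu)\le C|\lambda|^{-\min(1,\beta)}N_\infty(\mu)$, which is at least as strong as the claimed rate $\gamma=\min(1/2,\beta)$.

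I expect the main obstacle to be conceptual rather than computational: splitting $h_1=|q|^{1/2}$ directly caps the decay at $\beta/(\beta+1)<\min(1/2,\beta)$, and it is only the resolvent identity $w=\phi+h_0v$ — which lets \eqref{0} act on $q$ itself, isolates an $L^\infty$ source factor $g$, and lets the bounded piece be absorbed after the gain $|\lambda|^{-1}$ — that produces the full power $t^{-\beta}$. The remaining care is bookkeeping: tracking the dependence of all constants through the absorption step and verifying $\ell/2\le s$ so that the tail condition transfers to the norm $\|\cdot\|_{\tilde\ell/2,\ell/2}$.
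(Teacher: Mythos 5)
Your proof is correct, but it follows a genuinely different route from the paper's. The paper estimates $v=E_\lambda h_1w$ directly by splitting the \emph{square root} $h_1=|q|^{1/2}$ at height $|\lambda|^{1/2}$, bounding $\|E_\lambda h_1^0w\|_2\le C_0|\lambda|^{-1}\|h_1^0\|_\infty\|w\|_2$ for the truncated part and using the $L^{\tilde p,p}\to L^{\tilde p',p'}$ bound \eqref{est3} together with the tail condition for the unbounded part, then invoking \eqref{e13.0} for $\|w\|_2$. You instead pass through the resolvent identity $w=\phi+h_0v$ to rewrite $v=E_\lambda(qg)+E_\lambda(qv)$, split the full potential $q$ at height $t\sim|\lambda|$, and absorb the bounded piece; the unbounded piece is controlled by \eqref{e14.0} rather than by $\|w\|_2$. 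Both arguments use the same two ingredients (\eqref{est1} for the truncated part, \eqref{est3} plus \eqref{0} for the tail), but yours buys something real: the tail hypothesis \eqref{0} is stated for $q$, and when it is applied to $h_1^1=|q|^{1/2}\chi_{\{|q|>|\lambda|\}}$ as in the paper one only gets $\|h_1^1\|_{\tilde\ell,\ell}=\|q\chi_{\{|q|>|\lambda|\}}\|_{\infty,s}^{1/2}\le\varkappa^{1/2}|\lambda|^{-\beta/2}$, so the paper's intermediate claim $\|h_0^1\|_{\tilde\ell,\ell}\le\varkappa|\lambda|^{-\beta}$ appears to lose a factor of $2$ in the exponent and, as written, would only deliver $\gamma=\min(1/2,\beta/2)$. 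Your reorganisation lets \eqref{0} act on $q$ itself and yields the rate $\min(1,\beta)\ge\min(1/2,\beta)$, so it proves the stated inequality \eqref{e15} (indeed a slightly stronger one) and in effect repairs this step. The only points to keep explicit are the ones you already flag: $\|v\|_2<\infty$ a priori (via $L^{\tilde p',p'}\hookrightarrow L^2$ and \eqref{e14.0}) so the absorption is legitimate, the enlargement of $\lambda_0$ so that $t=|\lambda|/(2C_0)\ge t_0$, and the identification $\ell/2=s$ used to transfer \eqref{0} to the $\|\cdot\|_{\tilde\ell/2,\ell/2}$ norm.
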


\begin{proof}
Pick $\lambda\in \Lambda_0$ with $|\lambda| \ge \lambda_0$. We split $h_0$ into two terms $h_0=h_0^0+h_0^1$ with
\[
h_0^0=h_j\chi_{\{|h_j|\le |\lambda|^{1/2}\}},\quad h_0^1=h_j\chi_{\{|h_j|> |\lambda|^{1/2}\}}.
\]
We show that the following inequalities hold
\[
\|h_0^0\|_\infty \le |\lambda|^{1/2}, \quad \|h_0^0\|_{\tilde{\ell},\ell}\le \|h_0\|_{\tilde{\ell},\ell}, \quad \|h_0^1\|_{\tilde{\ell},\ell}\le \varkappa |\lambda|^{-\beta}.
\]
In light of these inequalities, we get
\begin{align*}
\|v\|_2&\le \|E_\lambda h_0^0w\|_2+\|E_\lambda h_0^1w\|_2
\\
&\le C\left(|\lambda|^{-1/2}\|w\|_2+\|h_0^1w\|_{\tilde{p},p}\right)
\\
&\le C\left(|\lambda|^{-1/2}\|w\|_2+\|h_0^1\|_{\tilde{\ell},\ell}\|w\|_2\right)
\\
&\le C\left(|\lambda|^{-1/2}\|w\|_2+\varkappa|\lambda|^{-\beta}\|w\|_2\right)
\\
&\le C|\lambda|^{-\gamma}\|w\|_2,
\end{align*}
where $C=C(Q, K,\varkappa,t_0)$ is a constant. Combining this inequality with \eqref{e13.0}, we get \eqref{e15}.
\end{proof}

\section{Proof of Theorem \ref{theorem0}}

We will use the following lemma. 

\begin{lemma}\label{lemma3}
Let $s>2$. Then there exists $\alpha_s \in (0,1/2)$ so that $s=s_{p_s'}/2$, where $p'_s=2+\alpha_s(r_s-2)$ for some $r_s \in (2,2/\alpha_s-2)$.
\end{lemma}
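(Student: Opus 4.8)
The plan is to turn the defining relation $s = s_{p'_s}/2$ into an explicit value for $p'_s$, and then to realize that value through the affine parametrization $p'_s = 2 + \alpha_s(r_s-2)$ by choosing $\alpha_s$ and $r_s$ by hand.

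First I would unravel the notation $s_r = 2r/(r-2)$. The condition $s = s_{p'_s}/2 = p'_s/(p'_s - 2)$ is equivalent, after clearing denominators, to $p'_s(s-1) = 2s$, that is
\[
p'_s = \frac{2s}{s-1}.
\]
Since the map $s \mapsto 2s/(s-1)$ is strictly decreasing on $(2,\infty)$ with limiting values $4$ as $s\to 2^+$ and $2$ as $s\to \infty$, for every $s > 2$ this forced value of $p'_s$ lies in the open interval $(2,4)$.

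Next I would choose $\alpha_s$. Recall that for the affine relation $p'_s = 2 + \alpha_s(r_s - 2)$ with $r_s \in (2, 2/\alpha_s - 2)$ to admit a solution one needs precisely $p'_s \in (2, 4(1-\alpha_s))$, since the endpoints $r_s = 2$ and $r_s = 2/\alpha_s - 2$ correspond to $p'_s = 2$ and $p'_s = 4(1-\alpha_s)$ respectively. The requirement $p'_s < 4(1-\alpha_s)$ reads $\alpha_s < (s-2)/(2(s-1))$. As $s > 2$ the quantity $(s-2)/(2(s-1))$ is positive and, being bounded above by $1/2$, any choice of $\alpha_s$ in the interval $(0,(s-2)/(2(s-1)))$ — for concreteness $\alpha_s = (s-2)/(4(s-1))$ — automatically satisfies $\alpha_s \in (0,1/2)$ and places the target value $2s/(s-1)$ inside $(2, 4(1-\alpha_s))$.

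Finally, with $\alpha_s$ fixed, I would invert the affine relation and set $r_s = 2 + (p'_s - 2)/\alpha_s$. The inequality $p'_s > 2$ gives $r_s > 2$, while $p'_s < 4(1-\alpha_s)$ gives $r_s < 2/\alpha_s - 2$, so $r_s$ lands in the admissible window; and by construction $2 + \alpha_s(r_s - 2) = p'_s = 2s/(s-1)$, which yields $s_{p'_s}/2 = s$. The whole argument is elementary algebra, and the only point requiring a moment's care is checking that the two constraints on $\alpha_s$ are compatible, which rests on the elementary bound $(s-2)/(2(s-1)) < 1/2$; I expect no genuine obstacle beyond this bookkeeping.
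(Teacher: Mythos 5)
Your proof is correct and follows essentially the same route as the paper: both reduce the problem to placing the forced value of $p'_s$ (which you compute explicitly as $2s/(s-1)$, where the paper argues by monotonicity and bijectivity of $t\mapsto s_t/2$) inside the window $(2,4(1-\alpha_s))$, which amounts to the same constraint $\alpha_s<(s-2)/(2(s-1))$, and then recover $r_s$ by inverting the affine map. Your version is just a more explicit rendering of the identical argument.
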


\begin{proof}
As $\alpha \in (0,1/2)\mapsto (4-4\alpha)/(2-4\alpha)$ is an increasing function, $\lim_{t\uparrow 1/2}(4-4\alpha)/(2-4\alpha)=+\infty$ and $\lim_{t\downarrow 0}(4-4\alpha)/(2-4\alpha)=2$, there exists $\alpha_s\in (0,1/2)$ so that $(4-4\alpha_s)/(2-4\alpha_s)<s$. Since $t\in (2, 4(1-\alpha_s))\mapsto s_{t}/2\in ((4-4\alpha_s)/(2-4\alpha_s),\infty)$ is bijective, we find $p'_s\in (2, 4(1-\alpha_s))$ so that $s_{p'_s}/2=s$. Next, using that $r\in (2,2/\alpha_s-2)\mapsto p'(r)=2+\alpha (r-2)\in (2,4(1-\alpha_s))$ is also bijective in order to derive that there exists $r_s\in (2,2/\alpha_s-2)$ such that $p'(r_s)=p'_s$. 
\end{proof}

\begin{proof}[Proof of Theorem \ref{theorem0}]
Let $\alpha_s$, $r_s$ and $p'_s$ as in the preceding lemma so that $s=s_{p'}/2:=\ell/2$, where $p':=p'_s$. Define $\tilde{p}'=p'_s/(\alpha_sr_s)$ and set $\tilde{\ell}=s_{\tilde{p}'}$. In this case $q_j\in L^{\tilde{\ell}/2,\ell/2}(Q)$, $j=1,2$. Let $\lambda \in \Lambda_0$, $\mu \in (-\infty ,0]$ and $\xi \in \mathbb{S}^1$. We denote by $u_1$ the solution $u$ of \eqref{e16.0} when $q=q_1$ and we denote by $u_2$ the solution of \eqref{e16.0} when $q=q_2$, $\mu =0$ and $\lambda$ is replaced by $-\lambda$. That is we have
\begin{align*}
&u_1=e^{(\lambda -\mu) (-x_1+ix'\cdot \xi)}+e^{-\lambda x_1}v_1=e^{-\lambda x_1}(e^{ -\mu (-x_1+ix'\cdot \xi)+i\lambda x'\cdot \xi }+v_1),
\\
&u_2=e^{-\lambda (-x_1+ix'\cdot \xi)}+e^{\lambda x_1}v_2=e^{\lambda x_1}(e^{-i\lambda x'\cdot \xi}+v_2).
\end{align*}
Whence
\[
u_1u_2=e^{-\mu (-x_1+ix'\cdot \xi)}+\tilde{u},
\]
where
\[
\tilde{u}=e^{-i\lambda x\cdot \xi}v_1+e^{ -\mu (-x_1+ix'\cdot \xi)+i\lambda x\cdot\xi}v_2+v_1v_2.
\]
We extend $q=q(x')$ by $0$ outside $Q'$ and we still denote this extension by $q$. We have
\[
\int_\Omega qu_1u_2=\hat{q}(\mu \xi)\int_I e^{\mu x_1}dx_1+\int_\Omega q\tilde{u}dx,
\]
where $\hat{q}$ denotes the Fourier transform of $q$. In this identity, replacing $\xi$ by $-\xi$  we obtain
\[
\int_\Omega qu_1u_2=\hat{q}(|\mu| \xi)N_1(\mu)+\int_\Omega q\tilde{u}dx,
\]
or equivalently
\[
\int_\Omega qu_1u_2=\hat{q}(\eta) N_1(\mu)+\int_\Omega q\tilde{u}dx,\quad \eta \in \mathbb{R}^2,\; \mu=-|\eta|.
\]
Hence

\begin{equation}\label{e18.0}
N_1(\mu)|\hat{q}(\eta)|\le \left|\int_\Omega qu_1u_2\right| +\|q\tilde{u}\|_1,\quad \eta \in \mathbb{R}^2,\; \mu=-|\eta|.
\end{equation}

Next,  applying \cite[Lemma 1.4]{Ch2}, we obtain
\begin{equation}\label{e19.0}
\int_\Omega qu_1u_2dx=\langle (\Sigma_2-\Sigma_1)(u_1{_{|\Gamma}})|u_2{_{|\Gamma}}\rangle,
\end{equation}
where $\langle \cdot |\cdot\rangle $ is the duality pairing between $H^{1/2}(\Gamma)$ and $H^{-1/2}(\Gamma)$.

From \eqref{e18.0} and \eqref{e19.0}, we get
\begin{equation}\label{e18.1}
N_1(\mu)|\hat{q}(\eta)|\le \left|\langle (\Sigma_2-\Sigma_1)(u_1{_{|\Gamma}})|u_2{_{|\Gamma}}\rangle\right| +\|q\tilde{u}\|_1,\quad \eta \in \mathbb{R}^2,\; \mu=-|\eta|.
\end{equation}

Il light of \eqref{e15.0.1}, we obtain
\[
\|u_j\|_{(1,2)}\le Ce^{2|\lambda| },\quad j=1,2.
\]
Here and below $C=C(Q,\tilde{Q},K,\varkappa ,\beta,t_0)>0$ is generic constants. Therefore, we have
\begin{equation}\label{e18.2}
\left|\langle (\Sigma_2-\Sigma_1)(u_1{_{|\Gamma}})|u_2{_{|\Gamma}}\rangle\right| \le Ce^{2|\lambda |}\|\Sigma_2-\Sigma_1\|,
\end{equation}
where we used the continuity of the trace map $w\in H^1(\Omega)\mapsto w_{|\Gamma}\in H^{1/2}(\Gamma)$.

On the other hand, we have
\begin{align*}
\|q\tilde{u}\|_1&\le \|q\|_2(\|v_1\|_2+N_\infty (\mu)\|v\|_2)+\|q\|_{\tilde{s},s}\|v_1v_2\|_{\tilde{s}',s'}
\\
&\le \|q\|_2(\|v_1\|_2+N_\infty (\mu)\|v\|_2)+\|q\|_{\tilde{s},s}\|v_1\|_2\|v_2\|_{\tilde{p}',p'}
\end{align*}

Assume that $|\lambda|\ge \lambda_0$. Then we derive from \eqref{e14.0} and \eqref{e15} that
\begin{equation}\label{e17}
\|\tilde{u}\|_2\le C|\lambda|^{-\gamma}N_\infty(\mu),
\end{equation}
where $\gamma=\min(1/2,\beta)$. Inequalities \eqref{e18.2} and \eqref{e17} in \eqref{e18.1} yield
\begin{equation}\label{e18.3.0}
N_1(\mu)|\hat{q}(\eta)|\le Ce^{2|\lambda|} \|\Sigma_2-\Sigma_1\|+|\lambda|^{-\gamma}N_\infty(\mu), \quad \eta \in \mathbb{R}^2,\; \mu=-|\eta|.
\end{equation}
Simple calculations give
\[
N_1(\mu)\ge e^{-2|\eta|}\quad \mbox{and}\quad \frac{N_\infty(\mu)}{N_1(\mu)}\le (1+|\eta|)\quad \eta \in \mathbb{R}^2,\; \mu=-|\eta|.
\]
Therefore \eqref{e18.3.0} implies
\begin{equation}\label{e18.3}
C|\hat{q}(\eta)|\le Ce^{2(|\lambda|+|\eta|)} \|\Sigma_2-\Sigma_1\|+(1+|\eta|)|\lambda|^{-\gamma}, \quad \eta \in \mathbb{R}^2.
\end{equation}

Note that, since  the right hand side is continuous in $\lambda$, inequality \eqref{e18.3} holds for every $|\lambda|\ge \lambda_0$. In particular, we have
\begin{equation}\label{e18}
C|\hat{q}(\eta)|^2\le e^{4(\lambda +|\eta|)} \|\Sigma_2-\Sigma_1\|^2+(1+|\eta|)^2\lambda^{-2\gamma},\quad \lambda \ge \lambda_0.
\end{equation}

Fix $\sigma>0$. Then \eqref{e18} implies
\begin{equation}\label{e19}
C\int_{B_\rho}(1+|\eta|^2)^{-\sigma}|\hat{q}(\eta)|^2d\eta \le e^{5(\lambda+\rho)} \|\Sigma_2-\Sigma_1\|^2+\rho^2(1+\rho^2)|\lambda|^{-2\gamma}
\end{equation}
$\rho >0$, $\lambda \ge \lambda_0$, where $B_\rho=B(0,\rho)$. Also, we have
\[
\int_{\{|\eta|\ge \rho\}}(1+|\eta|^2)^{-\sigma}|\hat{q}(\eta)|^2d\xi\le \rho^{-2\sigma}\|\hat{q}\|_2=(2\pi)^2\rho^{-2\sigma}\|q\|_2\le (2\pi)^2K^2 \rho^{-2\sigma}
\]
This inequality together with \eqref{e19} give
\[
C\|q\chi_{Q'}\|_{H^{-\sigma}}\le e^{5(\lambda+\rho)} \|\Sigma_2-\Sigma_1\|+\rho(1+\rho)\lambda^{-\gamma}+\rho^{-\sigma},\quad \rho >0,\; \lambda \ge \lambda_0,
\]
and hence
\begin{equation}\label{e20}
C\|q\|_{H^{-\sigma}(Q')}\le e^{5(\lambda+\rho)/2} \|\Sigma_2-\Sigma_1\|+\rho (1+\rho)\lambda^{-\gamma}+\rho^{-\sigma}, \quad \rho >0,\; \lambda \ge \lambda_0.
\end{equation}

Taking $\rho =\lambda^{\gamma/4}$ in \eqref{e20}, we find
\[
C\|q\|_{H^{-\sigma}(Q')}\le e^{c_0\lambda}\|\Sigma_2-\Sigma_1\|+\lambda^{-\sigma\gamma/4},\quad \lambda\ge \lambda_0,
\]
where $c_0=c_0(Q,K,\beta,t_0)$. This is the expected inequality.
\end{proof}

\end{document}